\documentclass[11pt, reqno]{amsart}

\usepackage{amscd,amsmath}
\usepackage{mathrsfs}
\usepackage{amsfonts}
\usepackage{amssymb}
\usepackage{enumerate}
\usepackage{setspace}
\usepackage{color}
\usepackage{esint}

\usepackage{tikz} 

\usepackage[margin=1in]{geometry}

\usepackage[english]{babel}

\usepackage{tabu}

\usepackage[colorlinks=true, citecolor=blue]{hyperref}

\newtheorem{theorem}{Theorem}[section]
\newtheorem{lemma}{Lemma}[section]

\newtheorem{remark}{Remark}[section]

\newcommand{\eqn}{\begin{eqnarray}}
\newcommand{\een}{\end{eqnarray}}

\newcommand{\ZZ}{\mathbb{Z}}

\newcommand{\pax}{\partial_x}
\definecolor{luh-dark-blue}{rgb}{0.0, 0.313, 0.608}

\numberwithin{equation}{section}

\begin{document}

\title[Singularity formation for the SGN equations]{Singularity formation for the Serre-Green-Naghdi equations and applications to \emph{abcd-}Boussinesq systems}

\author{Hantaek Bae}
\address{Department of Mathematical Sciences, Ulsan National Institute of Science and Technology (UNIST), Republic of Korea}
\email{hantaek@unist.ac.kr}

\author{Rafael Granero-Belinch\'on}
\address{Departamento de Matem\'aticas, Estad\'istica y Computaci\'on, Universidad de Cantabria, Spain.}
\email{rafael.granero@unican.es}

\date{\today}

\subjclass[2010]{35B44,35Q35,35L67,35L55,35R35}

\keywords{Green-Naghdi equations, $abcd-$Boussinesq equation, Finite time singuarity, Analiticity}

\begin{abstract}
In this work we prove that the solution of the Serre-Green-Naghdi equation cannot be globally defined when the interface reaches the impervious bottom tangentially. As a consequence, our result complements the paper \emph{Camassa, R., Falqui, G., Ortenzi, G., Pedroni, M., \& Thomson, C. Hydrodynamic models and confinement effects by horizontal boundaries. Journal of Nonlinear Science, 29(4), 1445-1498, 2019.} Furthermore, we also prove that the solution to the $abcd-$Boussinesq system can change sign in finite time. Finally, we provide with a proof of a scenario of finite time singularity for the $abcd-$Boussinesq system. These latter mathematical results are related to the numerics in \emph{Bona, \& Chen, Singular solutions of a Boussinesq system for water waves. J. Math. Study, 49(3), 205-220, 2016}.
\end{abstract}

\maketitle

{\small \tableofcontents}


\section{Introduction}
We consider the evolution of a shallow layer of a perfect fluid in a domain of finite depth with a flat bottom topography. We assume that the impervious flat boundary is located at $z=0$ (see figure \ref{fig1}). In this situation, when the physical parameters are in certain regime, the evolution of the free boundary can be approximated by several asymptotic models leading to approximations with different degree of accuracy. More precisely, let us denote $\epsilon$ the amplitude parameter and $\mu$ the shallowness parameter \cite{MR3060183}
$$
\epsilon=\frac{a}{h_0},\quad \mu^2=\frac{h_0^2}{L^2},
$$
where $a$ is the size of the surface variation, $L$ is the horizontal length scale (such as the wavelength of the wave) and $h_0$ is the typical depth. 

We also write $\eta$ for the free surface elevation from the average, $h=1+\epsilon \eta$ is a function representing the interface of the fluid and $u$ is the layer-averaged horizontal velocity of the fluid (see figure \ref{fig1})

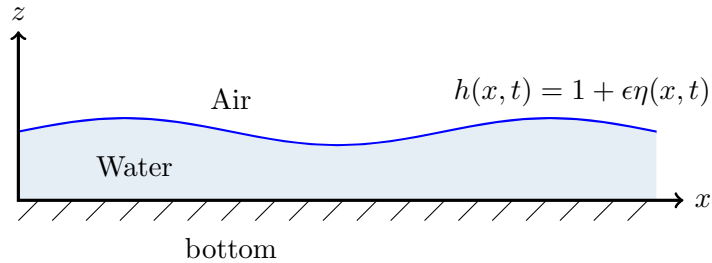
\begin{figure}[h]\label{fig1}
\begin{center} 
\begin{tikzpicture}[domain=0:3*pi, scale=0.9] 
\draw (pi,1.5) node { Air};
\draw[ultra thick, smooth, color=blue] plot (\x,{0.2*sin(\x r)+1});
\fill[luh-dark-blue!10] plot[domain=0:3*pi] (\x,0) -- plot[domain=3*pi:0] (\x,{0.2*sin(\x r)+1});
\draw[very thick,<->] (3*pi+0.4,0) node[right] {$x$} -- (0,0) -- (0,2.5) node[above] {$z$};
\node[right] at (2*pi,1.6) {$h(x,t)=1+\epsilon\eta(x,t)$};
\node[right] at (1,0.5) {Water};
\draw[-] (0,-0.3) -- (0.3, 0);
\draw[-] (0.5,-0.3) -- +(0.3, 0.3);
\draw[-] (1,-0.3) -- +(0.3, 0.3);
\draw[-] (1.5,-0.3) -- +(0.3, 0.3);
\draw[-] (2,-0.3) -- +(0.3, 0.3);
\draw[-] (2.5,-0.3) -- +(0.3, 0.3);
\draw[-] (3,-0.3) -- +(0.3, 0.3);
\draw[-] (3.5,-0.3) -- +(0.3, 0.3);
\draw[-] (4,-0.3) -- +(0.3, 0.3);
\draw[-] (4.5,-0.3) -- +(0.3, 0.3);
\draw[-] (5,-0.3) -- +(0.3, 0.3);
\draw[-] (5.5,-0.3) -- +(0.3, 0.3);
\draw[-] (6,-0.3) -- +(0.3, 0.3);
\draw[-] (6.5,-0.3) -- +(0.3, 0.3);
\draw[-] (7,-0.3) -- +(0.3, 0.3);
\draw[-] (7.5,-0.3) -- +(0.3, 0.3);
\draw[-] (8,-0.3) -- +(0.3, 0.3);
\draw[-] (8.5,-0.3) -- +(0.3, 0.3);
\draw[-] (9,-0.3) -- +(0.3, 0.3);
\draw (pi,-0.7) node { bottom};
\end{tikzpicture}   
\end{center}
\caption{The water-air interface $h$.}
\end{figure}

Then, some asymptotic models that one can consider as valid approximations of the full water wave problem are the nonlinear shallow water equations ($O(\mu)$ approximation), 
\begin{subequations}\label{NSW}
\begin{align}
&\eta_t+\pax (hu)=0, \label{NSWa}\\
&u_t +\epsilon u\pax u+\pax \eta =0, \label{NSWb}
\end{align}
\end{subequations}
the Serre-Green-Naghdi equations (SGN in short) \cite{MR3060183} ($O(\mu^2)$ approximation)
\begin{subequations}\label{GN}
\begin{align}
&\eta_t+\pax (hu)=0, \label{GN1a}\\
&u_t-\frac{\mu}{3h}\partial_x \left(h^3\partial_x u_t\right) +\epsilon u\pax u+\pax \eta -\frac{\epsilon\mu}{3h}\partial_x\left(h^3\left(u\partial_x^2 u-(\partial_x u)^2\right)\right)=0, \label{GN1b}
\end{align}
\end{subequations}
or the $abcd-$Boussinesq system \cite{bona1976model,bona2002boussinesq,bona2004boussinesq,bona2005long} ($O(\mu^2)$ approximation for weakly nonlinear waves $\epsilon=O(\mu)$)
\begin{subequations}\label{abcdB}
\begin{align}
&\left(1-b\mu\pax^{2}\right)\frac{h_{t}}{\epsilon} +\pax (a\mu\pax^{2}u+hu)=0,\label{abcdB1}\\
&\left(1-d\mu\pax^{2}\right)u_{t} +\pax (c\mu\pax^{2}h+\eta)+\epsilon u\pax u=0.\label{abcdB2}
\end{align}
\end{subequations}

The SGN equations are an extension of the classical shallow water (or Saint-Venant) equations taking into account first-order contributions of the dispersion of the water-waves problem. The SGN were derived by Serre \cite{serre1953contribution} and Green \& Naghdi \cite{green1976derivation} and heavily studied ever since. We refer the reader to the works \cite{MR3817287,MR2753374,MR2481064,MR2400253,MR3493692,MR3291451,MR2906225}
 and the references therein. 

One of the recent questions posed for the SGN system is the so-called shoreline problem \cite{MR3817287}. There, the water surface hits the bottom topography at the (a priori unknown) point $X(t)$ in a transverse way such that
$$
\partial_x h(X(t),t)>0.
$$
This is a challenging free boundary problem. In a related paper, Camassa, Falqui, Ortenzi, Pedroni \& Thomson \cite{camassa2019hydrodynamic} studied confinement effects by rigid boundaries in the dynamics of ideal fluids. These authors focused on the consequences of establishing contacts of material surfaces with the confining boundaries. When contact happens, these authors showed that certain self-similar solutions to the nonlinear shallow water equations \eqref{NSW} can develop singularities in finite time. These singularities are related to the well-known fact that the fluid will try to fill the dry spot. We observe that wetting (the function $h=1+\epsilon\eta$ becoming strictly positive) can only happen with loss of derivatives. In other words, water surface cannot detach from the bottom before a singularity occurs. The authors in \cite{camassa2019hydrodynamic} considered a class of special solutions of \eqref{NSW}, the so-called simple waves, and they proved that the vertical location of the shock position depends on the initial steepness of the interface near the bottom. This paper tries to shed some light on this question when the SGN equations are considered. In particular, we prove that solutions of the SGN equations having initially a dry spot cannot be globally smooth (see theorem \ref{theorem1} below).

The $abcd-$Boussinesq system \eqref{abcdB} was proposed by Bona, Chen \& Saut \cite{bona2002boussinesq,bona2004boussinesq}. In order that the system can be applied as a water wave model in certain regime, the parameters $a,b,c,$ $d$ must satisfy
$$
a+b+c+d=\frac{1}{3}-\tau,
$$
where $\tau$ is the surface tension coefficient. In this work we will always consider the case
$$
b,d\geq0,\quad a,c\leq0.
$$ 
In the case $b=d>0$ and $a,c<0$ the $abcd-$Boussinesq system admits the following Hamiltonian formulation
$$
\left(\begin{array}{c}\eta\\u\end{array}\right)_t=(1-b\partial_x^2)^{-1}\partial_x \left(\begin{array}{cc}0 & 1\\1 & 0\end{array}\right)\delta\mathscr{H}(\eta,u),
$$
with
$$
\mathscr{H}(\eta,u)=\frac{1}{2}\int -c(\partial_x u )^2-a(\partial_x \eta)^2+\eta^2+(1+\eta)u^2dx.
$$
Besides these Hamiltonian systems, there are a number of celebrated models that form part of the family of $abcd-$Boussinesq equations. For instance, the Bona-Smith system \cite{bona1976model}, the Kaup system \cite{kaup1975higher} or the Bona-Chen system \cite{bona1998boussinesq}.

The literature on $abcd-$Boussinesq systems is huge and as a consequence our references cannot be exhaustive. We first want to mention the papers \cite{bona1976model,schonbek1981existence,
kwak2019scattering,kwak2019asymptotic} where the authors study the global existence of solutions. Regarding the possible occurrence of singularities, Bona \& Chen \cite{bona2016singular} studied this system numerically. These authors obtained numerical evidence of solutions losing their positivity, becoming negative and finally blowing up in finite time. There are results showing the existence of solution on extended time intervals \cite{burtea2016new,burtea2016long}. Remarkably, \cite{burtea2016new} considers solutions that are not strictly positive and studies the case without the non-cavitation condition. The interested readers can go to \cite{MR3060183,lannesreview} and the references therein to find more details on the state of the art for the $abcd-$Boussinesq system. One of the goals of this work is to provide a rigorous mathematical foundation to the numerical evidence presented in \cite{bona2016singular}. In particular, we will be able to show the existence of solutions losing the positivity in finite time (see Theorem \ref{theorem2} below). Furthermore, we will provide with a scenario for finite time blow up of the type
$$
\limsup_{t\rightarrow T^*}\|h\|_{L^\infty}+\|u\|_{\dot{C}^{1/2}}=\infty
$$
(see Theorem \ref{theorem3} below). We would like to emphasize that, since we do not have any control on the time interval where our hypotheses occur, the comparison with the long time existence results in \cite{burtea2016new} is far from trivial.

The plan of the paper is as follows: In section \ref{sec2} we present our main results together with a brief discussion. In section \ref{sec3} we prove that the solution to the SGN equations \eqref{GN} emanating from certain initial data cannot be globally smooth. Finally, in section \ref{sec4} we prove that the solution to certain $abcd-$Boussinesq systems become negative in finite time. Furthermore, we also present a rigorous scenario for finite time singularity in the case of solutions to $abcd-$Boussinesq systems that are negative in an interval.

\section{Main results}\label{sec2}

We consider the systems \eqref{GN} and \eqref{abcdB} in the domain $\mathbb{T}=[-\pi,\pi]$ with periodic boundary conditions. Since the average depth of the layer is originally $1$, the wave profile has zero mean:
\[
\int_{-\pi}^\pi \eta_0(x)dx=0.
\]
By (\ref{GN1a}) and (\ref{abcdB1}), this condition is propagated over time. Thanks to structure of the equations, we also assume that 
\[
\int_{-\pi}^\pi u(t,x) dx =0.
\]

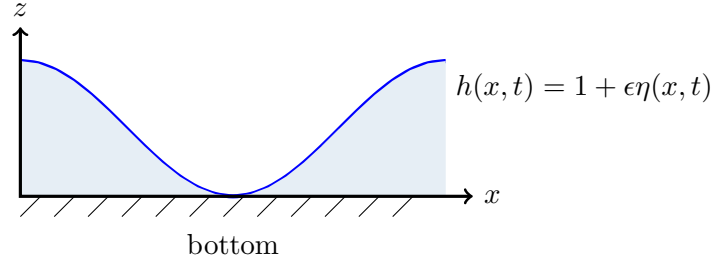
\begin{figure}[h]\label{fig2}
\begin{center} 
\begin{tikzpicture}[domain=0:2*pi, scale=0.9] 
\draw[ultra thick, smooth, color=blue] plot (\x,{1+cos(\x r)});
\fill[luh-dark-blue!10] plot[domain=0:2*pi] (\x,0) -- plot[domain=2*pi:0] (\x,{1+cos(\x r)});
\draw[very thick,<->] (2*pi+0.4,0) node[right] {$x$} -- (0,0) -- (0,2.5) node[above] {$z$};
\node[right] at (2*pi,1.6) {$h(x,t)=1+\epsilon\eta(x,t)$};
\draw[-] (0,-0.3) -- (0.3, 0);
\draw[-] (0.5,-0.3) -- +(0.3, 0.3);
\draw[-] (1,-0.3) -- +(0.3, 0.3);
\draw[-] (1.5,-0.3) -- +(0.3, 0.3);
\draw[-] (2,-0.3) -- +(0.3, 0.3);
\draw[-] (2.5,-0.3) -- +(0.3, 0.3);
\draw[-] (3,-0.3) -- +(0.3, 0.3);
\draw[-] (3.5,-0.3) -- +(0.3, 0.3);
\draw[-] (4,-0.3) -- +(0.3, 0.3);
\draw[-] (4.5,-0.3) -- +(0.3, 0.3);
\draw[-] (5,-0.3) -- +(0.3, 0.3);
\draw[-] (5.5,-0.3) -- +(0.3, 0.3);
\draw (pi,-0.7) node { bottom};
\end{tikzpicture}   
\end{center}
\caption{A scheme of the initial data leading to blow up in Theorem \ref{theorem1}.}
\end{figure}

Without loss of generality, in what follows we consider $\mu=\epsilon=1$. Now we can state our main result:

\begin{theorem}\label{theorem1}
Let us consider a pair of functions $(\eta_0,u_0)$ satisfying that
\begin{enumerate}[]
\item (A1): $1+\epsilon\eta_0(x)\ge 0$.
\item (A2): $\eta_0$ is a smooth even function such that $1+\epsilon\eta_0(0)=0$ and $\pax^2 \eta_0(0)\ge0$.
\item (A3): $u_0$ is a smooth odd function such that $\partial_x u_0(0)<0$.
\end{enumerate}
Let us assume that a solution to \eqref{GN} exists for this initial data. Then, this solution blows up in finite time.
\end{theorem}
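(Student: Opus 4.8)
The plan is to use the even/odd symmetry of the data to reduce the dynamics at the contact point $x=0$ to a closed system of ordinary differential equations, and then to detect a Riccati-type blow-up of the slope $\partial_x u(0,t)$. The first step is to propagate the symmetry. The system \eqref{GN} is invariant under the reflection $(\eta,u)(x,t)\mapsto(\eta(-x,t),-u(-x,t))$: checking term by term, the continuity equation and each term of the momentum equation pick up a common overall sign, and the operator $\tfrac{1}{3h}\partial_x(h^3\partial_x\cdot)$ respects the reflection. Since the reflected solution carries the same initial data as $(\eta_0,u_0)$ by (A2)--(A3), uniqueness in the smooth class forces $\eta(\cdot,t)$ to remain even and $u(\cdot,t)$ to remain odd for as long as the solution exists. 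In particular $u(0,t)=0$, $\partial_x h(0,t)=0$ and $\partial_x^2 u(0,t)=0$ for all such $t$.

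Next I would track the three scalars $H(t)=h(0,t)$, $w(t)=\partial_x u(0,t)$ and $a(t)=\partial_x^2\eta(0,t)=\partial_x^2 h(0,t)$, extracting their evolution from the equations at $x=0$. Writing the continuity equation as $h_t=-\partial_x(hu)$ and evaluating at the origin gives $\dot H=-Hw$; since $H(0)=1+\epsilon\eta_0(0)=0$ by (A2), uniqueness for this scalar ODE yields $H(t)\equiv0$, so the dry spot never wets. Differentiating the continuity equation twice and evaluating at $x=0$ gives $\dot a=-3aw$, whence $a(t)=a(0)\exp\!\left(-3\int_0^t w\,ds\right)\ge0$ because $a(0)=\partial_x^2\eta_0(0)\ge0$ by (A2). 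The momentum equation itself vanishes identically at $x=0$ (every term carries a factor of $h$), so the equation for $w$ must be read off from its first $x$-derivative there. The crucial algebraic observation is the identity
\[
\frac{1}{3h}\partial_x\!\left(h^3 g\right)=h\,\partial_x h\,g+\tfrac13 h^2\partial_x g,
\]
valid for any smooth $g$, which shows that both dispersive terms --- with $g=\partial_x u_t$ and with $g=u\partial_x^2 u-(\partial_x u)^2$ --- are in fact regular across the dry point and vanish to second order there, so their $x$-derivatives vanish at $x=0$. What survives from $\partial_x$ of \eqref{GN1b} at the origin is
\[
\dot w+w^2+a=0.
\]

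To conclude, since $a(t)\ge0$ the slope satisfies the differential inequality $\dot w\le-w^2$ with $w(0)=\partial_x u_0(0)<0$ by (A3). Setting $f=-w>0$ gives $\dot f\ge f^2$, so $f$ --- and hence $\|\partial_x u(\cdot,t)\|_{L^\infty}$ --- becomes infinite at some finite time $T^\ast\le 1/|\partial_x u_0(0)|$. This contradicts global smoothness and proves the theorem.

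I expect the main obstacle to be the rigorous treatment of the degenerate dispersive terms at the contact point: one must justify that $\tfrac{1}{3h}\partial_x(h^3\partial_x u_t)$ and its nonlinear counterpart are genuinely regular and vanish to the required order where $h=0$, and that for the assumed solution one may legitimately commute $\partial_x$ and $\partial_t$ and evaluate at $x=0$. Once the displayed identity removes the apparent $1/h$ singularity, the remaining steps --- the symmetry bookkeeping and the Riccati comparison --- are standard.
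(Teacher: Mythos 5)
Your proposal is correct and follows essentially the same route as the paper: propagate the even/odd symmetry, reduce to the pointwise system $\dot\alpha_0=-\alpha_0\beta_1$, $\dot\alpha_2=-3\beta_1\alpha_2$, $\dot\beta_1=-\beta_1^2-\alpha_2$ at $x=0$, use $\alpha_2(t)\ge 0$ to get the Riccati inequality $\dot\beta_1\le-\beta_1^2$, and conclude blow-up from $\beta_1(0)<0$. Your explicit identity showing the degenerate dispersive terms are regular and have vanishing $x$-derivative at the dry point (using $h(0,t)\equiv 0$ and $\partial_x h(0,t)=0$) is a welcome elaboration of a step the paper leaves implicit.
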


\begin{remark}
The same result holds for the nonlinear shallow water equations \eqref{NSW}. In this case, the strict positivity of the function $h=1+\eta$ implies that the nonlinear shallow water equations form a Friedrichs symmetrizable hyperbolic system. Thus, our singularity result in this case is achieved in the regime where the hyperbolicity degenerates.
\end{remark}

To prove this theorem we consider initial data with certain symmetries and prove the finite time blow-up using a pointwise method (see Theorem \ref{theorem1}). In other words, we prove that for certain family of initial data, if the solution exists, then certain derivatives for $h=1+\eta$ and $u$ blow up at the origin $x=0$.

We observe that the problem that we consider here is somehow related to the shoreline problem that studies the motion of water at a beach  \cite{MR3817287} but where the surface of the water is not transverse to the bottom topography at the shoreline. 

A similar idea can be applied to the $abcd$-Boussinesq system \eqref{abcdB}. For this system we can prove the following result:

\begin{theorem}\label{theorem2}
Let $b=c=0$ and consider $a\leq 0$ and $d\geq0$. Then there are smooth initial data $(\overline{h}_{0},\overline{u}_{0})$ with $\overline{h}_{0}>0$ such that the corresponding solution $h$ to (\ref{abcdB}) becomes negative in finite time.
\end{theorem}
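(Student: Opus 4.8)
The plan is to exploit the symmetry of the system and reduce the problem to a pointwise statement at the origin, in the same spirit as the argument behind Theorem \ref{theorem1}. With $b=c=0$, $\mu=\epsilon=1$ and writing $h=1+\eta$, the system \eqref{abcdB} becomes
\begin{equation*}
h_t+\partial_x\left(a\,\partial_x^2 u+hu\right)=0,\qquad \left(1-d\,\partial_x^2\right)u_t+\partial_x h+u\,\partial_x u=0.
\end{equation*}
First I would choose initial data with definite parity: $\eta_0$ (equivalently $h_0$) even and $u_0$ odd. A direct check shows that these parities are propagated by the flow, so that $u(t,\cdot)$ stays odd and $h(t,\cdot)$ stays even. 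In particular $u(t,0)=0$ and $\partial_x h(t,0)=0$ for all $t$, and the origin behaves as a stagnation point at which only the symmetric quantities $h(t,0)$, $\partial_x u(t,0)$, $\partial_x^3 u(t,0)$ survive.

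The heart of the argument is the evolution of the depth at the origin. Evaluating the first equation at $x=0$ and using the parities yields the exact identity
\begin{equation*}
\frac{d}{dt}h(t,0)=-a\,\partial_x^3 u(t,0)-h(t,0)\,\partial_x u(t,0).
\end{equation*}
This is the key formula, and it already explains the role of the hypotheses. The multiplicative term $-h\,\partial_x u$ on its own cannot change the sign of $h(t,0)$: if $a=0$ the depth equation is a pure transport equation and $h(t,0)=h_0(0)\exp\!\big(-\!\int_0^t \partial_x u(s,0)\,ds\big)>0$ for as long as the solution exists. The genuine sign change must therefore be produced by the dispersive flux term $-a\,\partial_x^3 u(t,0)$, and this is precisely where the sign of $a$ (here used with $a<0$) enters. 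Choosing $u_0$ odd with $\partial_x u_0(0)>0$ and $\partial_x^3 u_0(0)<0$ makes both terms on the right-hand side strictly negative at $t=0$.

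Concretely, I would fix $u_0=\sin x$ (so $\partial_x u_0(0)=1>0$ and $\partial_x^3 u_0(0)=-1<0$) and take a one-parameter family of even, strictly positive, zero-mean profiles with $\min h_0^{\delta}=h_0^{\delta}(0)=\delta$ and uniformly bounded in a high Sobolev norm, for instance $h_0^{\delta}=\delta+(1-\delta)(1-\cos x)$. The identity then gives $\tfrac{d}{dt}h(0,0)=a-\delta<a<0$, bounded away from zero uniformly in small $\delta$. Invoking local well-posedness for this bounded family, the solution exists on a common interval $[0,T_0]$ with uniform bounds; by continuity of $t\mapsto h(t,0)$ and $t\mapsto\partial_x^3 u(t,0)$ one obtains $\tfrac{d}{dt}h(t,0)\le -c_0/2<0$ on a uniform subinterval $[0,\tau]$. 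Integrating, $h(t,0)\le \delta-(c_0/2)t$, which is negative once $t>2\delta/c_0$; choosing $\delta<c_0\tau/2$ forces $h(\cdot,0)$ through zero before time $\tau$, which proves the claim.

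The main obstacle is the last step, namely the absence of closure. The identity for $\tfrac{d}{dt}h(t,0)$ does not close into a finite ODE, because the evolution of $\partial_x^3 u(t,0)$ couples to ever higher derivatives and to the nonlocal operator $(1-d\,\partial_x^2)^{-1}$; hence one cannot simply solve an ODE but must control $\partial_x^3 u(t,0)$ on a time interval long compared with $\delta/c_0$. I expect this to be delivered by a uniform local well-posedness estimate for the family $(\eta_0,u_0)$ in a sufficiently regular space (so that pointwise third derivatives are continuous in time) together with the barrier/continuity argument above; the quantitative balance between making $h_0(0)=\delta$ small and keeping the existence time and regularity bounds uniform is the technical crux. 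The parameter $d\ge 0$ plays only a benign role, entering through the sign-definite smoothing operator $(1-d\,\partial_x^2)^{-1}$ in the velocity equation and never obstructing the mechanism.
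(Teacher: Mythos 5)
Your core mechanism is exactly the paper's: propagate the parities ($h$ even, $u$ odd), evaluate the mass equation at the stagnation point $x=0$, and obtain the identity $\tfrac{d}{dt}h(t,0)=-a\,\partial_x^3u(t,0)-h(t,0)\,\partial_xu(t,0)$, with the sign change driven by the dispersive flux $-a\,\partial_x^3u(t,0)$ (the paper's hypothesis is precisely $a\,\partial_x^3u_0(0)>0$). Where you diverge is in how strictly positive initial data are produced, and this is where your version has a real, though fillable, gap that the paper's device avoids. The paper takes $h_0(0)=0$ exactly, notes $\tfrac{d}{dt}h(t,0)\big|_{t=0}=-a\,\partial_x^3u_0(0)<0$, and then solves \emph{backward} in time by a small $\delta$ (Lemma \ref{lemma2} gives existence on $[-T,T]$): the profile $h(-\delta,\cdot)$ is strictly positive (after checking that the minimum at $x=0$ persists), is relabeled as the new initial datum, and by construction becomes negative at $x=0$ after time $2\delta$. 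Only the continuity of one solution near $t=0$ is used. Your forward-in-time barrier over the family $h_0^\delta(0)=\delta\to0$ instead needs an existence time and an equicontinuity modulus for $t\mapsto\partial_x^3u(t,0)$ that are uniform in $\delta$; you correctly identify this as the crux but leave it as an expectation. It can be closed via Lemma \ref{lemma2}, whose time of existence and bounds depend only on $\|h_0\|_{X_1}+\|u_0\|_{X_1}$, uniformly bounded over your family; but note that for $b=c=0$ the term $a\partial_x^3u$ in the mass equation loses derivatives relative to the smoothing $(1-d\partial_x^2)^{-1}$, so the ``high Sobolev norm'' local well-posedness you invoke is not obviously available -- the paper works in analytic classes precisely for this reason (your explicit trigonometric data are analytic, so this is repairable but should be stated). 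Finally, both arguments genuinely require $a<0$: as your own $a=0$ remark shows, the mechanism dies when the mass equation is pure transport, so the case $a=0$ of the statement is not covered by either proof.
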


Once the function $h$ becomes negative it loses its physical meaning. Moreover, once $h$ becomes negative, under certain circumstances, the solution to this system may end up blowing up. The following theorem establishes some sufficient conditions leading to a finite time blow-up:
\begin{theorem}\label{theorem3}
Let $b=d=0$ and consider $a,c\leq0$. Let $(h,u)$ be a smooth solution having a maximal lifespan $T_{\text{max}}\in(0,\infty]$. Assume that $h$ is an even function and $u$ is an odd function. Furthermore assume that there exists $0<\omega\in (0,\pi]$ and $\sigma>0$ such that this solution satisfies
\begin{enumerate}
\item 
$$
h<-\sigma \quad \forall (x,t)\in [0,\omega]\times[0,T_{\text{max}}),
$$
\item 
$$
u\leq0 \quad \forall (x,t)\in [0,\omega]\times[0,T_{\text{max}}),
$$
\item 
$$
\partial_x h\geq0 \quad \forall (x,t)\in [0,\omega]\times[0,T_{\text{max}}),
$$
\item
$$
c\partial_x^3h\geq0 \quad \forall (x,t)\in [0,\omega]\times[0,T_{\text{max}}),
$$
\item
$$
a\partial_x^2u\geq0 \quad \forall (x,t)\in [0,\omega]\times[0,T_{\text{max}}).
$$
Then 
$$
T_{\text{max}}<\infty.
$$
\end{enumerate}
\end{theorem}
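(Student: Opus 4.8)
The plan is to set $\mu=\epsilon=1$, $b=d=0$ and use $\eta=h-1$ (so $\partial_x\eta=\partial_x h$) to rewrite \eqref{abcdB} as the pair
\[
h_t+a\partial_x^3 u+\partial_x(hu)=0,\qquad u_t+c\partial_x^3 h+\partial_x h+u\partial_x u=0.
\]
First I would record the consequences of the symmetry hypotheses: since $h(\cdot,t)$ stays even and $u(\cdot,t)$ stays odd (both evolution equations preserve these parities), one has $u(0,t)=0$, $\partial_x h(0,t)=0$, $\partial_x^3 h(0,t)=0$ and $\partial_x^2 u(0,t)=0$ for every $t\in[0,T_{\text{max}})$. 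The proof will then be a pointwise argument at the symmetry point $x=0$, tracking the single scalar
\[
v(t):=\partial_x u(0,t).
\]

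The main step is to derive a Riccati inequality for $v$. Differentiating the $u$-equation in $x$ and evaluating at $x=0$, using $u(0,t)=0$ to kill the transport contribution $u\partial_x^2 u$, gives
\[
v'(t)=-v(t)^2-\partial_x^2 h(0,t)-c\,\partial_x^4 h(0,t).
\]
I would then show the forcing is nonpositive by reading the maintained inequalities (3) and (4) as one-sided statements at the origin. Indeed, $\partial_x h\ge 0$ on $[0,\omega]$ together with $\partial_x h(0,t)=0$ forces $\partial_x^2 h(0,t)\ge 0$ (the origin is a one-sided minimum of $\partial_x h$), and similarly $c\,\partial_x^3 h\ge 0$ on $[0,\omega]$ with $c\,\partial_x^3 h(0,t)=0$ forces $c\,\partial_x^4 h(0,t)\ge 0$. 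Hence $v'\le -v^2$ for all $t\in[0,T_{\text{max}})$.

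To conclude I would use hypothesis (2): $u\le 0$ on $[0,\omega]$ and $u(0,t)=0$ make $x=0$ a one-sided maximum of $u$, so $v(t)\le 0$, and in the nondegenerate situation $v(0)<0$. Comparing with the solution of $w'=-w^2$, $w(0)=v(0)<0$, namely $w(t)=v(0)/(1+v(0)t)$, which escapes to $-\infty$ at $t^*=-1/v(0)$, the inequality $v'\le -v^2$ yields $v(t)\to-\infty$ at some time $\le t^*$; since $\partial_x u$ then blows up, $T_{\text{max}}<\infty$. Finally, to match the announced scenario I would feed this into the $h$-equation at the origin, $\partial_t h(0,t)=-a\partial_x^3 u(0,t)-h(0,t)\,v(t)$, where hypothesis (5) gives $a\partial_x^3 u(0,t)\ge0$ and hypothesis (1) gives $h(0,t)<-\sigma$; writing $H(t)=-h(0,t)>\sigma$ one obtains $H'\ge H(-v)$, so $H$ (hence $\|h\|_{L^\infty}$) blows up as well, driven by $-v\to+\infty$.

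The delicate points are, first, justifying rigorously that the one-sided derivative inequalities $\partial_x^2h(0,t)\ge0$ and $c\,\partial_x^4h(0,t)\ge0$ follow at each fixed time from the sign of the $h$-derivatives on $[0,\omega]$ (a Taylor/limit argument using the assumed smoothness), and second, the nondegeneracy needed to ignite the Riccati blow-up, i.e.\ excluding the borderline case $v\equiv 0$. The latter is the main obstacle: hypothesis (2) only yields $v\le 0$, so one must argue that $v(0)<0$ holds, or that $v$ is driven strictly negative by the forcing $-\partial_x^2h(0,t)-c\,\partial_x^4h(0,t)$ (which would require a strict version of (3) or (4) at the origin), after which the comparison argument runs automatically.
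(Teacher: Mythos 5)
Your route is genuinely different from the paper's. You run a pointwise Riccati argument at the symmetry point $x=0$ for $v(t)=\partial_x u(0,t)$ --- the same device the paper uses to prove Theorems \ref{theorem1} and \ref{theorem2} --- whereas the paper proves Theorem \ref{theorem3} by a virial-type argument built on the weighted integrals
$$
\mathscr{F}_h(t)=-\int_0^\omega\frac{h(x,t)}{x^\lambda}\,dx,\qquad \mathscr{F}_u(t)=-\int_0^\omega\frac{u(x,t)}{x^\lambda}\,dx,\qquad 0<\lambda<\tfrac12 .
$$
After integrating by parts, the boundary terms at $x=\omega$ and the contributions of hypotheses (3)--(5) all have a favourable sign, hypothesis (1) converts $\int hu\,x^{-1-\lambda}dx$ into $\frac{\sigma\lambda}{\omega}\mathscr{F}_u$, and Jensen's inequality gives $\frac{d}{dt}\mathscr{F}_u\geq \frac{\lambda}{2\omega^{2-\lambda}}\mathscr{F}_u^2$ together with $\frac{d}{dt}\mathscr{F}_h\geq\frac{\lambda\sigma}{\omega}\mathscr{F}_u$. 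Since $\mathscr{F}_h\leq C\|h\|_{L^\infty}$ and $\mathscr{F}_u\leq C\|u\|_{\dot C^\lambda}$, the paper obtains the low-regularity blow-up criterion $\|h\|_{L^\infty}+\|u\|_{\dot C^{1/2}}\to\infty$ announced in the introduction; your argument would only give blow-up of $\|\partial_x u\|_{L^\infty}$ (and of $\|h\|_{L^\infty}$ through the equation for $h(0,t)$).

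Your individual computations are correct: the parities do give $u(0,t)=\partial_x h(0,t)=\partial_x^3h(0,t)=\partial_x^2u(0,t)=0$, the one-sided sign conditions at the origin do yield $\partial_x^2h(0,t)\geq0$ and $c\,\partial_x^4h(0,t)\geq0$, and hence $v'\leq -v^2$ with $v\leq0$. But the obstacle you flag at the end is a genuine gap, not a technicality: the hypotheses of the theorem give only $v(0)\leq 0$, and $v'\leq-v^2$ with $v(0)=0$ is perfectly consistent with $v\equiv0$ (take $u_0$ vanishing to second order at the origin), so nothing ignites the Riccati blow-up and the proof does not close. There is no way to extract strict negativity of $\partial_x u_0(0)$ from conditions (1)--(5), so your argument proves a strictly weaker statement in which $\partial_x u_0(0)<0$ is an added hypothesis. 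The paper's integral formulation is precisely what softens this nondegeneracy requirement: to start the differential inequality $\frac{d}{dt}\mathscr{F}_u\gtrsim\mathscr{F}_u^2$ one only needs $\mathscr{F}_u>0$ at some time, i.e.\ $u\not\equiv0$ on $[0,\omega]$, rather than a strict sign of one derivative at one point. (Strictly speaking the paper also asserts $\mathscr{F}_u>0$ rather than $\mathscr{F}_u\geq0$ without comment, but averaging over $[0,\omega]$ makes that a far milder assumption than the pointwise one your proof requires.)
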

\begin{remark}
We observe that these values for the parameters $a,b,c,d$ imply that surface tension effects are considered.
\end{remark}
\begin{remark}
These hypotheses mean that $h$ is negative and has a single minimum in the interval $[-\omega,\omega]$. They also imply that $u$ is decreasing on $[0,\omega]$. They seem to be verified at least qualitatively by the numerical solution in \cite[Figure 11]{bona2016singular}.
\end{remark}
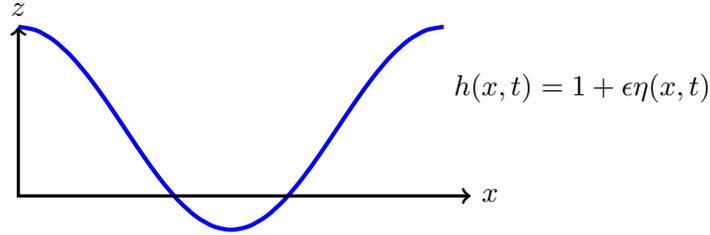
\begin{figure}[h]\label{fig3}
\begin{center} 
\begin{tikzpicture}[domain=0:2*pi, scale=0.9] 
\draw[ultra thick, smooth, color=blue] plot (\x,{1+1.5*cos(\x r)});
\draw[very thick,<->] (2*pi+0.4,0) node[right] {$x$} -- (0,0) -- (0,2.5) node[above] {$z$};
\node[right] at (2*pi,1.6) {$h(x,t)=1+\epsilon\eta(x,t)$};
\end{tikzpicture}   
\end{center}
\caption{A scheme of the initial data leading to blow up in Theorem \ref{theorem3}.}
\end{figure}

This result gives conditions leading to a loss of derivatives for the solution and as a consequence provides us with a better picture of the possible singularity. In other words, this theorem states that a finite time singularity will occur for solutions having certain geometry. We want to remark that \emph{a priori} not every solution that becomes negative will end up in a finite time singularity. As in the case of the Muskat problem \cite{cordoba2015note,cordoba2017note}, the solution could potentially become positive again before the singularity actually occurs.  Finally, let us also mention that this is a somehow unsatisfactory result in the sense that it is based on assumptions about the solutions which are not known to hold from conditions on the initial data.

\section{The Serre-Green-Naghdi equations}\label{sec3}
\begin{proof}[{\bf Proof of Theorem \ref{theorem1}}]
We now chose initial data leading to finite time blow-up. We consider initial data such that
\begin{enumerate}[]
\item (A1): $1+\eta_0(x)\ge 0$.
\item (A2): $\eta_0$ is a smooth even function such that $1+\eta_0(0)=0$ and $\pax^2 \eta_0(0)\ge0$.
\item (A3): $u_0$ is a smooth odd function such that $\partial_x u_0(0)<0$.
\end{enumerate}
Now let us assume that there exists a unique smooth solution of \eqref{GN} starting from such initial data. In the $(h,u)$ formulation, \eqref{GN} reads
\begin{subequations}\label{GN2}
\begin{align}
&h_t+\pax (hu)=0, \label{GN2a}\\
&u_t-\frac{1}{3h}\partial_x \left(h^3\partial_x u_t\right) +  u\pax u+\pax h -\frac{1}{3h}\partial_x\left(h^3\left(u\partial_x^2 u-(\partial_x u)^2\right)\right)=0.\label{GN2b}
\end{align}
\end{subequations}
We note that the symmetries of $(h,u)$ are preserved as long as the smooth solution of (\ref{GN2}) exists. 

We now define the following quantities:
\[
\alpha_i(t)=\pax^i h(x,t)\bigg{|}_{x=0},\quad \beta_i(t)=\pax^i u(x,t)\bigg{|}_{x=0}.
\]
Due to the symmetry properties of $h$ and $u$, we first have 
\[
\beta_0(t)=\beta_2(t)=\alpha_1(t)=\alpha_3(t)=0.
\]
We next derive the equations of $\alpha_{0}$, $\alpha_{2}$, and $\beta_{1}$. First we find that 
\[
\frac{d}{dt}\alpha_0(t)=-\alpha_1(t)\beta_0(t)-\alpha_0(t)\beta_1(t)=-\alpha_0(t)\beta_1(t).
\]
Since $\alpha_0(0)=0$, we have that $\alpha_{0}(t)=0$. Similarly, 
\eqn \label{alpha2}
\frac{d}{dt}\alpha_2(t)=-3\beta_1(t)\alpha_2(t).
\een 
To find the ODE for $\beta_1$, we take $\pax$ to (\ref{GN2b}) and evaluate the resulting equation at $x=0$. Then, we obtain
\begin{equation}\label{beta1}
\begin{split}
\frac{d}{dt}\beta_1(t)&=-\beta^{2}_1(t)-\alpha_2(t).
\end{split}
\end{equation}
Since $\alpha_{2}(0)\ge 0$ we obtain that $\alpha_{2}(t)\ge 0$. This in turn implies 
\[
\frac{d}{dt}\beta_1(t)\leq -\beta^{2}_1(t).
\]
Solving this ODE, we derive the following inequality:
\[
\beta_{1}(t)\leq \frac{\beta_{1}(0)}{1+t\beta_{1}(0)}.
\]
Since $\beta_{1}(0)<0$, 
\[
\beta_{1}(t)\rightarrow -\infty \quad \text{as} \quad t\rightarrow -\frac{1}{\beta_{1}(0)}.
\]
This completes the proof.
\end{proof}

\begin{remark}
It may seem that the blow-up is caused by $\beta_1^2$, but this is actually not the case. To show this, we consider the following system where the term $\beta_1^2$ has been dropped: \begin{equation}\label{alpha2b}
\begin{split}
\frac{d}{dt}\alpha_2(t)&=-3\beta_1(t)\alpha_2(t),\\
\end{split}
\end{equation}
\begin{equation}\label{beta1b}
\begin{split}
\frac{d}{dt}\beta_1(t)&=-\alpha_2(t).
\end{split}
\end{equation}
This system still has a finite time blow-up. Indeed, we have 
$$
\frac{d}{dt}\alpha_2(t)=-3\beta_1(t)\alpha_2(t)=3\beta_1(t)\frac{d}{dt}\beta_1(t),
$$
so
$$
\alpha_2(t)-\alpha_2(0)=\frac{3}{2}\left(\beta_1(t)^2-\beta_1(0)^2\right).
$$
Then, \eqref{beta1b} implies 
$$
\frac{d}{dt}\beta_1(t)=-\alpha_2(0)-\frac{3}{2}\left(\beta_1(t)^2-\beta_1(0)^2\right)=-\frac{3}{2}\beta_1(t)^2 +\left(\frac{3}{2}\beta_1(0)^2-\alpha_2(0)\right).
$$
If 
\[
\frac{3}{2}\beta_1(0)^2-\alpha_2(0)\leq 0,
\]
the same conclusion is reached.
\end{remark}

\begin{remark}
A slight modification of the proof allows to prove the finite time blow-up for initial data  satisfying $\alpha_0(0)=\alpha_1(0)=\beta_0(0)=0$, without imposing the symmetry conditions (A2) and (A3).  
\end{remark}

\section{Application to the $abcd$-Boussinesq system}\label{sec4}
We start this section defining the functional framework: for $\tau>0$, we define the following Banach scale of spaces of analytic functions
$$
X_\tau=\left\{f:\sum_{k\in\ZZ} e^{|k|\tau}|\widehat{f}(k)|<\infty\right\}.
$$
Functions in this space are analytic in a complex strip of width $\tau$ around the real axis.

In this section, we apply the method used for (\ref{GN}) to (\ref{abcdB}). To do this, we frist prove the local existence of analytic solutions. To prove the local existence of unique solutions of (\ref{abcdB}), we use a method akin to the abstract Cauchy-Kovalevski theorem. We collect this result in the following Lemma: 

\begin{lemma}\label{lemma2}Let $(h_0,u_0)\in X_1\times X_1$ and $a,b,c$ and $d$ be constants such that either
\begin{itemize}
\item $$b,d>0, a,c\in\mathbb{R},$$ or
\item $$b=c=0, a\in\mathbb{R},d>0.$$
\end{itemize}
There exists $0<T=T(a,b,c,d,\|h_0\|_{X_{1}},\|u_0\|_{X_{1}})$ such that there exists a unique solution of (\ref{abcdB}) in the following space
$$
(h,u)\in C([-T,T],X_{0.25})\times C([-T,T],X_{0.25}).
$$
\end{lemma}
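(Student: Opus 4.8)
The plan is to prove Lemma \ref{lemma2} via an abstract Cauchy--Kovalevskaya (ACK) argument on the Banach scale $\{X_\tau\}_{0<\tau\le 1}$, treating the $abcd$-Boussinesq system as an evolution equation $\partial_t(h,u)=F(h,u)$ after inverting the dispersive operators on the left-hand side. First I would rewrite \eqref{abcdB} in the form
\begin{subequations}
\begin{align}
&\frac{h_t}{\epsilon}=-\left(1-b\mu\pax^2\right)^{-1}\pax\left(a\mu\pax^2 u+hu\right),\\
&u_t=-\left(1-d\mu\pax^2\right)^{-1}\left(\pax\left(c\mu\pax^2 h+\eta\right)+\epsilon u\pax u\right),
\end{align}
\end{subequations}
so that the right-hand side $F$ becomes the generator of the flow. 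The crucial structural point is that the Fourier multipliers $\left(1-b\mu k^2\right)^{-1}$ and $\left(1-d\mu k^2\right)^{-1}$ are \emph{smoothing} of order two precisely when $b,d>0$: they contribute a factor $\mathcal{O}(1/k^2)$ that compensates the loss coming from the highest derivative terms $a\mu\pax^2 u$ and $c\mu\pax^2 h$ (order two) in the numerators, so that $F$ maps $X_\tau$ into $X_\tau$ and, more importantly, loses at most one derivative in the scale. In the second (degenerate) case $b=c=0$, the first equation has no smoothing from $b$ but also no $c\mu\pax^2 h$ term to smooth away, and the nonlinearity $\pax(hu)$ loses only one derivative; the second equation still enjoys the $(1-d\mu\pax^2)^{-1}$ smoothing with $d>0$, which absorbs $a\mu\pax^2 u$. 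So both cases are compatible with an ACK scheme.

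Next I would verify the two hypotheses of the ACK theorem (in the form of Nirenberg--Nishida, or equivalently Safonov). The \emph{first} is the scale-loss estimate: for $0<\tau'<\tau\le 1$ and $(h,u)$, $(\tilde h,\tilde u)$ in a ball of $X_\tau$, one needs
$$
\|F(h,u)-F(\tilde h,\tilde u)\|_{X_{\tau'}}\le \frac{C}{\tau-\tau'}\left(\|h-\tilde h\|_{X_\tau}+\|u-\tilde u\|_{X_\tau}\right).
$$
The algebra property of $X_\tau$ (which follows from $\widehat{fg}=\widehat f*\widehat g$ and the subadditivity $e^{|k|\tau}\le e^{|j|\tau}e^{|k-j|\tau}$) handles the product terms $hu$ and $u\pax u$; the gain of the factor $1/(\tau-\tau')$ comes from the elementary inequality $|k|e^{|k|\tau'}\le \frac{C}{\tau-\tau'}e^{|k|\tau}$, which converts each surviving derivative into a loss of one unit in the radius of analyticity. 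The dispersive prefactors only improve these bounds. The \emph{second} hypothesis is a uniform-in-$\tau$ bound $\|F(h,u)\|_{X_{\tau}}\le C$ on the ball, which is immediate from the same estimates with $\tau'=\tau$ and the observation that each multiplier $\left(1-b\mu k^2\right)^{-1}(a\mu k^2)$ is bounded in $k$.

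The main obstacle, and the step I would spend the most care on, is the derivative bookkeeping that shows $F$ genuinely loses at most one derivative in each of the two parameter regimes, so that the ACK machinery applies with a single power of $1/(\tau-\tau')$. Concretely, I must confirm that in the multiplier $\left(1-b\mu k^2\right)^{-1}\pax\left(a\mu\pax^2(\cdot)\right)$ the net symbol $\frac{a\mu (ik)^3}{1+b\mu k^2}$ grows only linearly in $k$ when $b>0$, and that in the degenerate case the term $a\mu\pax^2 u$ appears only under the smoothing operator $\left(1-d\mu\pax^2\right)^{-1}$ in the $u$-equation (not in the unregularized $h$-equation), so the symbol $\frac{a\mu(ik)(ik)^2}{1+d\mu k^2}$ is again linear. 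Once the worst-case symbol is confirmed to be $\mathcal{O}(|k|)$ in both cases, the estimates close and the ACK theorem yields a time $T=T(a,b,c,d,\|h_0\|_{X_1},\|u_0\|_{X_1})>0$ together with a unique solution in $C([-T,T],X_{0.25})^2$, with the terminal radius $0.25<1$ reflecting the total analyticity loss over $[0,T]$. Uniqueness follows from the Lipschitz estimate in the same scale via a standard Gronwall argument.
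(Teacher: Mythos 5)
Your overall strategy---an abstract Cauchy--Kovalevskaya argument on the scale $X_\tau$, using the algebra property for the products and the inequality $|k|e^{|k|\tau'}\le \frac{C}{\tau-\tau'}e^{|k|\tau}$ for the surviving derivatives---is the same in spirit as the paper's proof, which runs a direct energy estimate in $X^{\nu(t)}$ with a linearly shrinking radius $\nu(t)=0.9-k|t|$. Your treatment of the first case $b,d>0$ is correct: the worst symbols are $\frac{a(ik)^3}{1+bk^2}$ and $\frac{c(ik)^3}{1+dk^2}$, both $\mathcal{O}(|k|)$, and the scheme closes.

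However, there is a genuine gap in your handling of the degenerate case $b=c=0$, $d>0$, and it comes from misreading which equation carries which dispersive term. In \eqref{abcdB} the term $a\mu\pax^2 u$ sits inside the flux of the \emph{first} equation, i.e.\ the equation for $h$, which is the one that loses its regularization when $b=0$; the term $c\mu\pax^2 h$ is the one in the $u$-equation, and it vanishes since $c=0$. So after inversion the first component of $F$ contains $a\,\pax^3 u$ with symbol $ia k^3$ and \emph{no} smoothing prefactor: it loses three derivatives of $u$, not one. Your claim that ``the term $a\mu\pax^2 u$ appears only under the smoothing operator $(1-d\mu\pax^2)^{-1}$ in the $u$-equation'' is backwards, and with a three-derivative loss the Lipschitz estimate degenerates to $C/(\tau-\tau')^{3}$, which breaks the single-power hypothesis of the Nirenberg--Nishida/Safonov theorem. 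The paper repairs exactly this imbalance by using the asymmetric energy $\mathfrak{E}(t)=\|h(t)\|_{X^{\nu(t)}}+\|\pax^2 u(t)\|_{X^{\nu(t)}}$: measured against $\pax^2u$, the dangerous term becomes $|k|\cdot\bigl(|k|^2|\hat u(k)|\bigr)$, a one-derivative loss, while the equation for $\widehat{\pax^2 u}$ picks up the factor $\frac{k^2}{1+dk^2}$ from the $(1-d\pax^2)^{-1}$ smoothing, so the $\hat h$ forcing there is again only $\mathcal{O}(|k|)\,|\hat h(k)|$. You would need to incorporate this (or an equivalent) rebalancing of the norms before the ACK machinery applies in the second case.
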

\begin{proof} Let us assume that $t$ moves forward. The case of backward time can be handled similarly. Without loss of generality we fix $\mu=\epsilon=1$. We are going to prove the a priori estimates. We define
$$
\nu(t)=0.9-k|t|,
$$
for certain $k$ to be fixed later. The Fourier transform of (\ref{abcdB}) is
\begin{subequations}\label{abcdBv2}
\begin{align}
&\hat{h}_{t}(n)=\frac{ia n^3}{1+b|n|^{2}} \hat{u}(n)-\frac{in}{1+b|n|^{2}} \sum_{m=-\infty}^\infty \hat{h}(n-m)\hat{u}(m),\label{abcdB1v2}\\
&\hat{u}_{t}(n) =\frac{ci n^3}{1+d|n|^{2}} \hat{h}(n)-\frac{in}{1+d|n|^{2}}\hat{h}(n)- \frac{in}{2(1+d|n|^{2})} \sum_{m=-\infty}^\infty \hat{u}(n-m)\hat{u}(m)=0.\label{abcdB2v2}
\end{align}
\end{subequations}
We now consider the first case $b,d>0, a,c\in\mathbb{R}$. We define the energy
$$
E(t)=\|h(t)\|_{X^{\nu(t)}}+\|u(t)\|_{X^{\nu(t)}}.
$$
Then, using that these spaces satisfy the following inequality
$$
\|fg\|_{X^{\nu(t)}}\leq \|f\|_{X^{\nu(t)}}\|g\|_{X^{\nu(t)}},
$$ 
we compute
\begin{align*}
\frac{d}{dt}E(t)&=\sum_{n=-\infty}^\infty e^{\nu(t)|n|}\Re( \hat{h}_t(n)\frac{\overline{\hat{h}}(n)}{|\hat{h}(n)|}+\hat{u}_t(n)\frac{\overline{\hat{u}}(n)}{|\hat{u}(n)|})-k\sum_{n=-\infty}^\infty|n|e^{\nu(t)|n|}(|\hat{h}(n)|+|\hat{u}(n)|)\\
&\leq \frac{|a|}{b}\sum_{n=-\infty}^\infty|n|e^{\nu(t)|n|}|\hat{u}(n)| -k\sum_{n=-\infty}^\infty|n|e^{\nu(t)|n|}(|\hat{h}(n)|+|\hat{u}(n)|)+C(b)\|h(t)\|_{X^{\nu(t)}}\|u(t)\|_{X^{\nu(t)}}\\
&\quad+\frac{|c|}{d}\sum_{n=-\infty}^\infty|n|e^{\nu(t)|n|}|\hat{h}(n)| +C(d)\|h(t)\|_{X^{\nu(t)}}+C(d)\|u(t)\|_{X^{\nu(t)}}^2.
\end{align*}
We now take
$$
k=\max\left\{\frac{|a|}{b},\frac{|c|}{d}\right\}+1<\infty.
$$
With this choice of $k$ we find that
\begin{align*}
\frac{d}{dt}E(t)&\leq \left(\frac{|a|}{b}-k\right)\sum_{n=-\infty}^\infty|n|e^{\nu(t)|n|}|\hat{u}(n)| +C(b)\|h(t)\|_{X^{\nu(t)}}\|u(t)\|_{X^{\nu(t)}}\\
&\quad+\left(\frac{|c|}{d}-k\right)\sum_{n=-\infty}^\infty|n|e^{\nu(t)|n|}|\hat{h}(n)| +C(d)\|h(t)\|_{X^{\nu(t)}}+C(d)\|u(t)\|_{X^{\nu(t)}}^2\\
&\leq C(d)E(t)+C(b,d)E^2(t).
\end{align*}
Using Gronwall's inequality we conclude the existence of a uniform time of existence $T=T(a,b,c,d,E(0))$ such that 
$$
E(t)<2E(0).
$$
Restricting to even smaller times we can ensure that $\nu(t)>0.25$.

We now consider the second case $b=c=0, a\in\mathbb{R}, d>0$. With this choice of parameters, we have that the abcd-Boussinesq equation reads
\begin{subequations}\label{abcdBv3}
\begin{align}
&\hat{h}_{t}(n)=ia n^3 \hat{u}(n)-in\sum_{m=-\infty}^\infty \hat{h}(n-m)\hat{u}(m),\label{abcdB1v3}\\
&\hat{u}_{t}(n) =-\frac{in}{1+d|n|^{2}}\hat{h}(n)- \frac{in}{2(1+d|n|^{2})} \sum_{m=-\infty}^\infty \hat{u}(n-m)\hat{u}(m).\label{abcdB2v3}
\end{align}
\end{subequations}
We define the energy
$$
\mathfrak{E}(t)=\|h(t)\|_{X^{\nu(t)}}+\|\pax^2 u(t)\|_{X^{\nu(t)}}.
$$
\begin{align*}
\frac{d}{dt}\mathfrak{E}(t)&=\sum_{n=-\infty}^\infty e^{\nu(t)|n|}\Re( \hat{h}_t(n)\frac{\overline{\hat{h}}(n)}{|\hat{h}(n)|}+\widehat{\pax^2 u}_t(n)\frac{\overline{\widehat{\pax^2 u}}(n)}{|\widehat{\pax^2 u}(n)|})-k\sum_{n=-\infty}^\infty|n|e^{\nu(t)|n|}(|\hat{h}(n)|+|n|^2|\hat{u}(n)|)\\
&\leq |a|\sum_{n=-\infty}^\infty e^{\nu(t)|n|}|n|^3|\hat{u}(n)|+\|\pax h(t)\|_{X^{\nu(t)}}\|u(t)\|_{X^{\nu(t)}}+\|h(t)\|_{X^{\nu(t)}}\|\pax u(t)\|_{X^{\nu(t)}}\\
&\quad -k\sum_{n=-\infty}^\infty|n|e^{\nu(t)|n|}(|\hat{h}(n)|+|n|^2|\hat{u}(n)|)+\frac{1}{d}\sum_{n=-\infty}^\infty|n|e^{\nu(t)|n|}|\hat{h}(n)|+\frac{1}{d}\|u(t)\|_{X^{\nu(t)}}\|\pax u(t)\|_{X^{\nu(t)}}
\end{align*}
We now take
$$
k=\max\left\{|a|,\frac{2}{d}+4\mathfrak{E}(0)\right\}+1<\infty.
$$
Then
\begin{align*}
\frac{d}{dt}\mathfrak{E}(t)&\leq (|a|-k)\sum_{n=-\infty}^\infty e^{\nu(t)|n|}|n|^3|\hat{u}(n)|+\|h(t)\|_{X^{\nu(t)}}\|\pax u(t)\|_{X^{\nu(t)}}\\
&\quad +\left(\frac{1}{d}-\frac{k}{2}\right)\sum_{n=-\infty}^\infty|n|e^{\nu(t)|n|}|\hat{h}(n)|+\frac{1}{d}\|u(t)\|_{X^{\nu(t)}}\|\pax u(t)\|_{X^{\nu(t)}}\\
&\quad+\left(\|u(t)\|_{X^{\nu(t)}}-\frac{k}{2}\right)\sum_{n=-\infty}^\infty|n|e^{\nu(t)|n|}|\hat{h}(n)|\\
&\leq C(d)\mathfrak{E}(t)^2.
\end{align*}
Invoking Gronwall's inequality, we conclude the existence of a uniform time of existence $T=T(a,b,c,d,\mathfrak{E}(0))$ such that 
$$
\mathfrak{E}(t)<2\mathfrak{E}(0).
$$
Restricting to even smaller times we can ensure that $\nu(t)>0.25$.

The uniqueness follow from a standard contradiction argument once that the constructed solution is analytic.
\end{proof}

Equipped with this local existence of solution result, we can prove the existence of solution that change sign.

\begin{proof}[{\bf Proof of Theorem \ref{theorem2}}] 
We now chose initial data leading to solutions that change sign at the origin $x=0$. Let $h_0\in X_1$ be an even function and $u_0\in X_1$ be an odd function. We also impose the following hypothesis: 
\begin{enumerate}[]
\item (A1): $a\partial_x^3 u_0(0)>0$.
\item (A2): $h_0(0)=0$.
\item (A3): $h_0(x)>0$ if $x\neq0$ and $h_0(x)>1$ for $0.5<|x|<\pi$.
\item (A4): $\pax^2h_0(x)>\sigma>0$ for $|x|<0.5$.
\end{enumerate}
Due to Lemma \ref{lemma2}, there is a smooth solution emanating from this initial data. We also observe that the symmetries are preserved by the evolution of (\ref{abcdB}). We recall that (\ref{abcdB}) with $b=0$, $\epsilon=1$ and $h=1+\eta$:
\begin{subequations}\label{acdB}
\begin{align}
&h_{t} +\pax (a\pax^{2}u+hu)=0,\\
&\left(1-d\pax^{2}\right)u_{t} +\pax h +u\pax u=0.
\end{align}
\end{subequations}

We denote
\[
\alpha_i(t)=\pax^i h(x,t)\bigg{|}_{x=0},\quad \beta_i(t)=\pax^i u(x,t)\bigg{|}_{x=0}.
\] 
Due to the symmetry properties of $h$ and $u$, we first have 
\[
\beta_0(t)=\beta_2(t)=\alpha_1(t)=\alpha_3(t)=0.
\]
We next consider $\alpha_{0}$:
$$
\frac{d}{dt}\alpha_0(t)=-a\beta_3(t)-\alpha_1(t)\beta_0(t)-\alpha_0(t)\beta_1(t)=-a\beta_3(t)-\alpha_0(t)\beta_1(t).
$$
By (A1) and (A2),  
$$
\frac{d}{dt}\alpha_0(t)\bigg{|}_{t=0^{+}}=-a\beta_3(0)<0.
$$
Similarly, going backward in time (which we can do due to Lemma \ref{lemma2}), 
$$
\frac{d}{dt}\alpha_0(t)\bigg{|}_{t=0^{-}}=a\beta_3(0)>0.
$$
We now chose $\delta\in (0,T)$ which is small enough such that 
\eqn \label{restrictiondelta}
\alpha_{0}(\delta)=h(\delta,0)<0 \quad \text{and} \quad \alpha_{0}(-\delta)=h(-\delta,0)>0.
\een
And we define the new initial data
\eqn \label{newdata}
(\overline{h}_0(x),\overline{u}_0(x))=(h(-\delta,x),u(-\delta,x)).
\een
By restricting the size of $\delta$ if necessary, this new initial data satisfies
$$
\overline{h}_0(x)>0.5 \ \text{ for } \ 0.5<|x|<\pi,\quad \;0<\frac{\sigma}{2}<\partial_x^2\overline{h}_0(x) \ \text{ for } \  |x|<0.5
$$
due to the smoothness of the solution with (A3) and (A4). So, there are no other relative maxima or minima in the interval $(-0.5,0.5)$ and thus $x=0$ is the global minimum for the initial data. Let 
\[
\left(\overline{h}(t,x),\overline{u}(t,x)\right)=\left(h(t-\delta,x),u(t-\delta,x)\right).
\]
Then, $\left(\overline{h}(t,x),\overline{u}(t,x)\right)$ is the solution of (\ref{acdB}) with initial data in (\ref{newdata}). By (\ref{restrictiondelta}), we conclude that $\overline{h}_0(0)>0$ but $\overline{h}(2\delta,0)<0$. 
\end{proof}

\begin{remark}For the case $b\neq0$, we can prove a similar result for initial data satisfying the hypothesis
$$
\frac{a}{b}\pax u_0(0)+\sum_{n=-\infty}^\infty\int_{-\pi}^{\pi}\frac{e^{-\frac{|y+2\pi n|}{\sqrt{b}}}}{b^{1/2}}\left[\pax(h_0(y) u_0(y))+a\pax u_0(y)\right]dy<0.
$$
\end{remark}
Furthermore, once that the solution change sign it may end up having a singularity. In the following theorem we prove that, under certain circumstances, such a singularity occurs.
\begin{proof}[{\bf Proof of Theorem \ref{theorem3}}]
For $0<\lambda<1/2$, we define the functionals
$$
\mathscr{F}_h(t)=-\int_{0}^\omega\frac{h(x,t)}{x^{\lambda}}dx,
$$
$$
\mathscr{F}_u(t)=-\int_{0}^\omega\frac{u(x,t)}{x^{\lambda}}dx.
$$
We observe that, due to the hypotheses of the Theorem, we have that
$$
0<\mathscr{F}_u,\mathscr{F}_h\quad\forall\,t\in [0,T_{\text{max}}).
$$
Furthermore,
$$
\mathscr{F}_h\leq C(\omega,\lambda)\|h\|_{L^\infty}
$$
and
$$
\mathscr{F}_u\leq C(\omega)\|u\|_{\dot{C}^\lambda}.
$$
Thus, a blow up of $\mathscr{F}_u$ or $\mathscr{F}_h$ implies a finite time singularity for the system. We argue by contradiction: let us assume that a solution satisfying the hypotheses in the statement exists globally, \emph{i.e.} $T_{\text{max}}=\infty$. Then we will prove that there is a blow up for our functionals and this will be the contradiction. 

Using the symmetry of $u$ to cancel some of the boundary terms together with the hypotheses of the statement, we compute 
\begin{align*}
\frac{d}{dt}\mathscr{F}_h&=\int_0^\omega \frac{a\pax^{3}u}{x^\lambda}dx+\int_{0}^\omega\frac{\pax(hu)}{x^\lambda}dx\\
&= \frac{a\pax^{2}u}{x^\lambda}\bigg{|}^\omega_0-\int_0^\omega a\pax^{2}u\pax\left(\frac{1}{x^\lambda}\right)dx+\frac{hu}{x^\lambda}\bigg{|}^\omega_0-\int_{0}^\omega hu\pax\left(\frac{1}{x^\lambda}\right)dx\\
&= \frac{a\pax^{2}u(\omega)}{\omega^\lambda}+\lambda\int_0^\omega \frac{a\pax^{2}u}{x^{1+\lambda}}dx+\frac{h(\omega)u(\omega)}{\omega^\lambda}+\lambda\int_{0}^\omega \frac{hu}{x^{1+\lambda}}dx\\
&\geq\lambda\int_{0}^\omega \frac{h}{x^{\lambda}}\frac{u}{x^{\lambda}}\frac{1}{x^{1-\lambda}}dx\\
&\geq\frac{\lambda\sigma}{\omega}\mathscr{F}_u.
\end{align*}
Similarly, using Jensen's inequality we find that
\begin{align*}
\frac{d}{dt}\mathscr{F}_u&=\int_0^\omega \frac{c\pax^{3}h}{x^\lambda}dx+\int_0^\omega \frac{\pax h}{x^\lambda}dx+\int_{0}^\omega\frac{\pax(u^2/2)}{x^\lambda}dx\\
&= \int_0^\omega \frac{c\pax^{3}h}{x^\lambda}dx+\int_0^\omega \frac{\pax h}{x^\lambda}dx+\frac{u^2}{2x^\lambda}\bigg{|}^\omega_0-\int_{0}^\omega \frac{u^2}{2}\pax\left(\frac{1}{x^\lambda}\right)dx\\
&=\int_0^\omega \frac{c\pax^{3}h}{x^\lambda}dx+\int_0^\omega \frac{\pax h}{x^\lambda}dx+\frac{u(\omega)^2}{2\omega^\lambda}+\lambda\int_{0}^\omega \frac{u^2}{2}\frac{1}{x^{1+\lambda}}dx\\
&\geq\lambda\int_{0}^\omega \frac{u^2}{2}\frac{1}{x^{1+\lambda}}dx\\
&\geq\frac{\lambda}{2\omega^{1-\lambda}}\int_{0}^\omega \frac{u^2}{x^{2\lambda}}dx\\
&\geq\frac{\lambda}{2\omega^{2-\lambda}}\left(\mathscr{F}_u\right)^2.
\end{align*}
Thus, we find the blow up in finite time for $\mathscr{F}_u$ and $\mathscr{F}_h$. As a consequence we conclude the contradiction.
\end{proof}

\section*{Acknowledgments}
H.B. was supported by NRF-2018R1D1A1B07049015. R.G-B was supported by the project ”Mathematical Analysis of Fluids and Applications” with reference PID2019-109348GA-I00/AEI/ 10.13039/501100011033 and acronym ``MAFyA” funded by Agencia Estatal de Investigaci\'on and the Ministerio de Ciencia, Innovacion y Universidades (MICIU). The authors are grateful to Vincent Duchene for fruitful discussions and to David Lannes for useful comments that greatly improved the presentation. Finally, the authors gratefully thank to the Referee for the constructive comments and recommendations which definitely help to improve the readability and quality of the paper.

\bibliographystyle{plain}

\end{document}